\newcommand{\CM}{Cohen-Macaulay}
\newcommand{\n}{\mathfrak{n} }
\newcommand{\m}{\mathfrak{m} }
\newcommand{\rt}{\rightarrow}
\newcommand{\image}{\operatorname{image}}
\newcommand{\depth}{\operatorname{depth}}
\newcommand{\projdim}{\operatorname{projdim}}
\newcommand{\injdim}{\operatorname{injdim}}
\newcommand{\Tor}{\operatorname{Tor}}
\newcommand{\curv}{\operatorname{curv}}
\newcommand{\injcurv}{\operatorname{injcurv}}
\newcommand{\Hom}{\operatorname{Hom}}
\newcommand{\Ext}{\operatorname{Ext}}
\theoremstyle{plain}
\newtheorem{theorem}{Theorem}[section]
\newtheorem{corollary}[theorem]{Corollary}
\newtheorem{lemma}[theorem]{Lemma}
\theoremstyle{definition}
\newtheorem{remark}[theorem]{Remark}
\newtheorem{example}[theorem]{Example}
\theoremstyle{remark}
\begin{document}

\title[Obstructions]{Obstructions to curvature of modules over Cohen-Macaulay rings}
\author{Tony~J.~Puthenpurakal}
\date{\today}
\address{Department of Mathematics, IIT Bombay, Powai, Mumbai 400 076, India}

\email{tputhen@gmail.com}
\subjclass{Primary 13D02; Secondary 13D40, 13H10}
\keywords{ Growth of resolutions, curvature, Cohen-Macaulay rings, Tor-vanishing, Ext-vanishing }

 \begin{abstract}
Let $(A,\m)$ be a \CM \ local ring with residue field $k$. If $M$ is a finitely generated $A$-module then set $\curv_A(M) = \limsup_n\sqrt[n]{\beta_n^A(M)}$. We show that under mild hypotheses the existence of a single module $M$ with $1 \leq \curv(M) < \curv(k)$ imposes obstructions to both $\curv(k)$ and $\curv(M)$.
Similarly we show that the condition $\Tor^A_n(M, N) = 0$ for $n \gg 0$ imposes constraints on both $\curv(M)$ and $\curv(N)$.
\end{abstract}
 \maketitle
\section{introduction}
Let $(A,\m)$ be a Noetherian local ring with residue field $k$ and let $M$ be a finitely generated $A$-module. By $\ell(-)$ we denote the length function on $A$-modules. Let $\beta_n^A(M) = \ell(\Tor^A_n(M,k))$ for $n \geq 0$ be the $n^{th}$-betti number of $M$.
We set the \emph{curvature} of $M$ to be
\[
\curv_A(M) = \limsup_n\sqrt[n]{\beta_n^A(M)}.
\]
We drop the subscript $A$ if it is clear from the context.
It can be shown that $\curv(M) \leq \curv(k) $, see  \cite[4.1.9]{A} and $\curv(k) < \infty$; see \cite[4.1.5]{A}.  It can be shown that $A$ is \emph{not} a complete intersection if and only if $\curv(k) > 1$; see \cite[8.2.1]{A}.

In this paper we study the set $c(A) = \{ \curv M \mid \projdim_A M = \infty \}$ when $A$ is \CM \ and \emph{not} a complete intersection. We note that $c(A) \subseteq [1, \curv k]$. It is not known whether $c(A)$ is a finite set.  We show that if $c(A)$ is not a singleton set then under mild hypotheses it yields an obstruction of $\curv(k)$. We also show that if $\Tor^A_n(M, N) = 0$ for $n \gg 0$ then it imposes a constraint on $\curv(M)$ and $\curv(N)$.

\s\label{setup} \emph{Setup:} Let $(A,\m)$ be a \CM \ local ring of dimension $d$. Further  assume $A$ is \emph{NOT} a complete intersection. Let
$$e(A) = \lim_{n \rt \infty} \frac{d!}{n^d} \ell(A/\m^n) \quad \text{be the multiplicity of $A$}. $$
All modules considered will be finitely generated.

Our first result is
\begin{theorem}\label{first}
(with hypotheses as in \ref{setup}). Further assume that $\lim_{n \rt \infty} \sqrt[n]{\beta_n(k)}$ exists. Suppose there exists an $A$-module $M$ with $\projdim_A M = \infty$ and $\curv(M) < \curv(k)$.
Then
\begin{enumerate}[\rm (1)]
  \item $\displaystyle{\curv(k) \leq \frac{e(A)}{2} - 1.}$
  \item $\curv(M) < \sqrt{e(A)}  - 1$.
\end{enumerate}
\end{theorem}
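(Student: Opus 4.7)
The plan is to reduce to the Artinian case by going modulo a minimal reduction of $\m$, then sharpen the classical length-recursion bound over the Artinian ring using the existence of a module $\overline{M}$ with curvature strictly smaller than that of $k$.

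\emph{Reduction to the Artinian case.} Passing to $A[T]_{\m A[T]}$ preserves curvature, multiplicity, and the \CM \ hypothesis, so assume $k$ is infinite. Replace $M$ by $\Omega^d_A M$: this is maximal \CM, has the same curvature as $M$, and still has infinite projective dimension, so we may assume $M$ is MCM. Since $k$ is infinite, choose $\underline{x} = x_1, \ldots, x_d \in \m$ which is simultaneously a minimal reduction of $\m$ (so $\ell(A/(\underline{x})) = e(A)$ by the \CM \ property) and has $k$-linearly independent images in $\m/\m^2$. Being a system of parameters in the \CM \ ring $A$ makes $\underline{x}$ regular on $A$, and MCM-ness makes it regular on $M$. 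Set $R = A/(\underline{x})$ and $\overline{M} = M/(\underline{x})M$. Then $\ell(R) = e := e(A)$; the minimal $A$-resolution of $M$ tensored with $R$ is a minimal $R$-resolution of $\overline{M}$, yielding $\beta^R_n(\overline{M}) = \beta^A_n(M)$; and Gulliksen's factorization $P^A_k(t) = (1+t)^d P^R_k(t)$ (valid since the $\bar x_i$ minimally generate $\m$) gives $\curv_A(k) = \curv_R(k)$. Writing $s = \curv_R(k)$ and $c = \curv_R(\overline{M}) < s$, the problem becomes: over the Artinian local ring $R$ of length $e$ show $s \leq e/2 - 1$ and $c < \sqrt{e} - 1$.

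\emph{Length recursion and the classical bound.} Over $R$, for any finitely generated module $N$ the short exact sequences $0 \to \Omega^{n+1}_R N \to R^{\beta^R_n(N)} \to \Omega^n_R N \to 0$ give the length recursion
\[
\ell(\Omega^n_R N) + \ell(\Omega^{n+1}_R N) = e \cdot \beta^R_n(N).
\]
Applied to $N = k$, the hypothesis $\lim_n \sqrt[n]{\beta^R_n(k)} = s$ together with a simple ratio argument forces $\ell(\Omega^n_R k)/\beta^R_n(k) \to e/(s+1)$. The trivial inequality $\ell \geq \mu = \beta$ then only recovers the classical bound $s \leq e - 1$, and analogously $c \leq e - 1$.

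\emph{Sharpening using the slow module.} To obtain (1) one needs to strengthen $\ell(\Omega^n_R k) \geq \beta^R_n(k)$ to $\ell(\Omega^n_R k) \geq 2 \beta^R_n(k) + o(s^n)$, and for (2) to obtain the parallel quadratic estimate $\ell(\Omega^n_R \overline{M}) \geq (c+1)\beta^R_n(\overline{M}) + o(c^n)$. Plugged into the asymptotic identity of the previous paragraph these yield $e/(s+1) \geq 2$ and $e/(c+1) \geq c+1$ respectively. I would look for a homological pairing that uses the coexistence of a fast module ($k$) and a slow one ($\overline{M}$): for example, the Yoneda action of $\Ext^*_R(k,k)$ on $\Ext^*_R(\overline{M},k)$—whose generating function has strictly larger radius of convergence than the algebra's, since $c < s$—or a comparison of the two spectral sequences of the double complex $F^k_\bullet \otimes_R F^{\overline{M}}_\bullet$ of minimal free resolutions. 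Strictness in (2) should be inherited from strictness of $c < s$.

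\emph{Main obstacle.} The crux is converting the mere existence of $\overline{M}$ into the sharpened length inequality: the length recursion alone and the standard change-of-rings spectral sequences give only the bound $e - 1$. I expect that both the strict inequality $c < s$ and the existence (not merely $\limsup$) of $\lim \sqrt[n]{\beta_n(k)}$ are essential to promote any such homological comparison into an inequality on the actual exponential growth rate rather than just on a subsequence.
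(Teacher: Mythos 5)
Your reduction to the Artinian case is correct and matches the paper's (the paper uses an iterated superficial element instead of a minimal reduction, but the effect is the same). You also correctly identify the key length recursion $\ell(\Omega^n N) + \ell(\Omega^{n+1} N) = e\cdot\beta_n(N)$ and correctly observe that the trivial bound $\ell \geq \mu$ only recovers $\curv(k) \leq e - 1$. But the actual sharpening step — the crux, as you yourself flag — is missing: you speculate that a Yoneda pairing or a double-complex spectral sequence might yield the needed inequality, but you do not produce the argument, and those tools are not what is needed here.

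The paper's mechanism is more elementary and worth knowing. First, since $\lim\sqrt[n]{\beta_n(k)}$ is assumed to exist and $\curv(M) < \curv(k)$, the comparison lemma (\ref{lim}) gives $\beta_n(M)/\beta_n(k) \to 0$. Then one tensors the single short exact sequence $0 \to \Omega^1(M) \to A^{\beta_0(M)} \to M \to 0$ with $X_i := \Omega^i(k)$. Because $\Tor_1^A(M, X_i) \cong \Tor_{i+1}^A(M,k) = k^{\beta_{i+1}(M)}$, this yields a four-term exact sequence, and the estimate $\ell(U \otimes V) \geq \mu(U)\mu(V)$ applied to the two outer terms gives
\[
\ell(X_i) \geq \frac{1}{\beta_0(M)}\Bigl(-\beta_{i+1}(M) + \beta_i(k)\bigl(\beta_0(M) + \beta_1(M)\bigr)\Bigr).
\]
Adding this to the analogous bound for $X_{i+1}$, using $\ell(X_i) + \ell(X_{i+1}) = e(A)\beta_i(k)$, dividing by $\beta_i(k)$, and taking $\limsup$ — the $\beta_\bullet(M)/\beta_\bullet(k)$ terms vanish, and $\limsup \beta_{i+1}(k)/\beta_i(k) \geq \alpha$ — gives $e(A) \geq \bigl(1 + \beta_1(M)/\beta_0(M)\bigr)(1+\alpha)$. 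Replacing $M$ by $\Omega^i(M)$ and taking $\limsup$ once more produces the single combined inequality $e(A) \geq (1 + \curv M)(1 + \curv k)$, from which (1) follows since $\curv M \geq 1$ and (2) follows (strictly) since $\curv k > \curv M$. So the hypothesis that the limit exists is used exactly once, to force $\beta_n(M)/\beta_n(k) \to 0$; without that step the error terms in the four-term sequence do not disappear. Your plan correctly located where the work must happen, but the inequality you would need is obtained by this direct tensor-and-count argument, not by a higher homological pairing.
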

\begin{remark}
All known examples of modules satisfy $$\limsup_n\sqrt[n]{\beta_n(M)} = \liminf_n\sqrt[n]{\beta_n(M)}$$ and so in all known cases  the limit exists. See \cite[4.3.6]{A}.
\end{remark}

\s We note that for any \CM \ local ring $A$ it is known that $\curv(k) \leq e(A) - 1$; see \cite[4.2.7]{A}.  The bound is attained for the so called rings of minimal multiplicity, see \cite[p.\ 50]{A}.

As an easy consequence of Theorem \ref{first} we obtain
\begin{corollary}\label{cor-first}
  (with hypotheses as in \ref{first}). Assume $\curv(k) > e(A)/2 - 1$.
If $\projdim M = \infty$ then $\curv(M) = \curv(k)$.
\end{corollary}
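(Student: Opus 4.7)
The plan is to argue by contrapositive, invoking only part (1) of Theorem \ref{first}. The inequality $\curv(M) \leq \curv(k)$ is always available (this is \cite[4.1.9]{A}, cited in the introduction), so the hypothesis $\curv(M) \neq \curv(k)$ automatically upgrades to the strict inequality $\curv(M) < \curv(k)$ needed to trigger Theorem \ref{first}.

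Concretely, I would suppose for contradiction that there exists a finitely generated $A$-module $M$ with $\projdim_A M = \infty$ and $\curv(M) \neq \curv(k)$. By the bound $\curv(M) \leq \curv(k)$ this forces $\curv(M) < \curv(k)$. Since the corollary inherits the hypotheses of Theorem \ref{first} (in particular the existence of $\lim_n \sqrt[n]{\beta_n(k)}$ and the assumption from \ref{setup} that $A$ is \CM \ and not a complete intersection), the hypotheses of Theorem \ref{first} are satisfied by this $M$. Part (1) of that theorem then yields
\[
\curv(k) \leq \frac{e(A)}{2} - 1,
\]
which directly contradicts the standing assumption $\curv(k) > e(A)/2 - 1$.

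Thus no such $M$ can exist, i.e.\ every $M$ with $\projdim_A M = \infty$ satisfies $\curv(M) = \curv(k)$. There is no real obstacle here: the entire content is packaged inside Theorem \ref{first}(1), and the corollary is just its logical reformulation. Part (2) of Theorem \ref{first} is not needed for this argument.
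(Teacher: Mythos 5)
Your argument is correct and is exactly what the paper intends: the paper's proof is a one-line "This follows from part (1) of Theorem \ref{first}," and you have simply spelled out the contrapositive, correctly noting that $\curv(M) \leq \curv(k)$ turns $\curv(M) \neq \curv(k)$ into the strict inequality required to apply Theorem \ref{first}(1).
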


\s To study infinite injective resolutions the notion of \emph{injective curvature} was introduced in the paper \cite{A-ext}.
We define
$$ \injcurv(M) = \limsup_n \sqrt[n]{\ell(\Ext^n_A(k, M))}.$$

Our next result is also unexpected.
\begin{theorem}
\label{second}(with hypotheses as in \ref{setup}).  Let $M, N$ be $A$-modules.
\begin{enumerate}[\rm (1)]
  \item If $\Tor^A_i(M, N) = 0$ for $i \gg 0$ then
  \begin{enumerate}[\rm (a)]
    \item $$\min \{ \curv{M}, \curv{N} \} \leq \sqrt{e(A)} - 1. $$
    \item Further assume $\projdim M = \projdim N  = \infty$. Then
    $$\max \{ \curv{M}, \curv{N} \} \leq \frac{e(A)}{2} - 1. $$
  \end{enumerate}
  \item If $\Ext^i_A(M, N) = 0$ for $i \gg 0$ then
  \begin{enumerate}[\rm (a)]
    \item $$\min \{ \curv{M}, \injcurv{N} \} \leq \sqrt{e(A)} - 1. $$
    \item Further assume $\projdim M = \injdim N  = \infty$. Then
    $$\max \{ \curv{M}, \injcurv{N} \} \leq \frac{e(A)}{2} - 1. $$
  \end{enumerate}
\end{enumerate}
\end{theorem}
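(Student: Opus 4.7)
The plan is to reduce to the Artinian case ($d=0$) and then to establish the single multiplicative inequality
\[
(\curv(M)+1)(\curv(N)+1) \leq e(A)
\]
under the Tor-vanishing hypothesis, assuming both $\projdim M$ and $\projdim N$ are infinite. From this one extracts both (a) and (b) of part (1) at once: the smaller factor is at most $\sqrt{e(A)}$, giving (a); and since the other factor is at least $2$, the larger factor is at most $e(A)/2$, giving (b). If either projective dimension is finite the corresponding curvature is $0$ and (a) is trivial (since $A$ is Cohen-Macaulay and not regular, $e(A) \geq 2$). Part (2) will then follow from part (1) via Matlis duality in the Artinian case.

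For the reduction, I would replace $M$ and $N$ by their $d$-th syzygies, which are maximal Cohen-Macaulay with the same curvatures as $M, N$, and inherit the Tor-vanishing hypothesis up to an index shift. A system of parameters $\mathbf{x}$ for $A$ is then regular on $A \oplus \Omega^d M \oplus \Omega^d N$, and $\ov A = A/(\mathbf{x})$ is Artinian, still not a complete intersection, with $e(\ov A) = e(A)$. Reduction modulo $\mathbf{x}$ preserves minimal free resolutions term-by-term (so Betti numbers and curvatures) and preserves Tor-vanishing one element at a time via the long exact sequence coming from $0 \to N \xrightarrow{x_i} N \to N/x_iN \to 0$.

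In the Artinian setting I would analyze the Cartan-Eilenberg double complex $F_\bullet \otimes_A G_\bullet$ built from the minimal free resolutions of $M$ and $N$. The Tor-vanishing makes the total complex eventually exact, while minimality forces every differential into $\m \cdot (F_\bullet \otimes G_\bullet)$. Since $\ell(\m) = e(A)-1$ in the Artinian case, comparing lengths of boundary images with the ranks of the summands $F_i \otimes G_j = A^{\beta^A_i(M)\beta^A_j(N)}$ should produce the desired multiplicative asymptotic inequality. The main obstacle is extracting the sharp constant: a naive length count only yields the weaker bound $\curv(M)\curv(N) \leq e(A)-1$, which suffices for (a) but not for (b). Obtaining the full $+1$'s will require exploiting minimality more finely, presumably through a refined $\m$-adic filtration on the double complex together with a comparison against the known bound $\curv(k) \leq e(A)-1$.

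For part (2), after reducing to the Artinian case (replacing $M$ by a sufficiently high syzygy and $N$ by a sufficiently deep cosyzygy from a minimal injective resolution so that a system of parameters is regular on all the relevant modules), Matlis duality $(-)^\vee = \Hom_A(-, E)$ gives the natural isomorphism $\Ext^i_A(M,N) \cong \Tor^A_i(M, N^\vee)^\vee$. Hence the Ext-vanishing hypothesis is equivalent to Tor-vanishing against $N^\vee$, and in the Artinian case $\injcurv(N) = \curv(N^\vee)$ while $\injdim(N) = \projdim(N^\vee)$. Thus part (2) reduces directly to part (1) applied to $M$ and $N^\vee$.
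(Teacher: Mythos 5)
The core of the theorem is the inequality $(1+\curv(M))(1+\curv(N))\leq e(A)$ in the Artinian case, and your proposal does not establish it. You concede that your Cartan--Eilenberg length count yields at best $\curv(M)\curv(N)\leq e(A)-1$ and defer the ``$+1$'s'' to an unspecified refined $\m$-adic filtration; that deferred step is precisely where all the content lies. Moreover the fallback bound is not enough even for part (1)(a): it only gives $\min\{\curv(M),\curv(N)\}\leq\sqrt{e(A)-1}$, and $\sqrt{e(A)-1}>\sqrt{e(A)}-1$ whenever $e(A)>1$, so neither (a) nor (b) follows from it. The paper obtains the sharp inequality by a much more elementary device: in dimension zero, after replacing $M$ by a high syzygy so that $\Tor^A_i(M,N)=0$ for all $i>0$, tensor $0\rt \Omega^1(N)\rt A^{\beta_0(N)}\rt N\rt 0$ with $M_n=\Omega^n(M)$ to get a short exact sequence, apply $\ell(U\otimes V)\geq\mu(U)\mu(V)$ (see \ref{est}) together with $\ell(M_n)+\ell(M_{n+1})=e(A)\beta_n(M)$ to get $e(A)\geq\bigl(1+\beta_1(N)/\beta_0(N)\bigr)\bigl(1+\beta_{n+1}(M)/\beta_n(M)\bigr)$, then use the ratio-versus-root inequalities of \ref{rudin} and replace $N$ by its syzygies. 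Without an argument of this kind your outline has a genuine hole at its center.

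Two further problems occur in your reduction. First, an arbitrary system of parameters does not preserve multiplicity: in general $\ell(A/(\mathbf{x}))\geq e(A)$ and the inequality can be strict (e.g.\ $x=t^2$ in $k[[t]]$), which is the wrong direction for your purposes; you need superficial elements (equivalently a minimal reduction), and hence the infinite-residue-field base change of \ref{base}, exactly as the paper does one element at a time. Second, for part (2) in positive dimension the cosyzygies of $N$ in a minimal injective resolution are not finitely generated, so ``replace $N$ by a deep cosyzygy and go modulo a regular sequence'' does not make sense as stated; the paper instead passes to the completion and replaces $N$ by the maximal Cohen--Macaulay module $X$ in an approximation $0\rt Y\rt X\rt N\rt 0$ with $\injdim Y<\infty$, which preserves both the Ext-vanishing and $\injcurv$. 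Your use of Matlis duality once one is in the Artinian case is correct, and is a pleasant alternative to the paper's direct computation with the minimal $\omega$-resolution of $N$, but it only becomes available after a correct reduction and, in any case, it again rests on the unproved multiplicative inequality from part (1).
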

As an easy consequence of Theorem \ref{second} we obtain
\begin{corollary}\label{cor-first-s}
  (with hypotheses as in \ref{first}). Assume $\curv(k) > e(A)/2 - 1$.
  \begin{enumerate}[\rm (a)]
    \item If $\Tor^A_n(M, N) = 0$ for all $n \gg 0$ then $\projdim_A M < \infty$ or $\projdim_A N < \infty$.
    \item If $\Ext^n_A(M, N) = 0$ for $n \gg 0$ then  $\projdim_A M < \infty$ or $\injdim_A N < \infty$.
  \end{enumerate}
\end{corollary}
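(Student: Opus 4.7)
The plan is to argue both parts by contradiction, combining the upper bound from Theorem \ref{second} with the strong dichotomy in Theorem \ref{first} (or, equivalently for (a), with its consequence Corollary \ref{cor-first}). The hypothesis $\curv(k) > e(A)/2 - 1$ is exactly what is needed to make the bound supplied by Theorem \ref{second} strictly smaller than $\curv(k)$, which in turn triggers Theorem \ref{first}(1).

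For (a), suppose to the contrary that $\projdim_A M = \infty$ and $\projdim_A N = \infty$. The Tor-vanishing hypothesis then lets me apply Theorem \ref{second}(1)(b), which yields
$$\max \{\curv(M),\ \curv(N)\} \leq \frac{e(A)}{2} - 1.$$
Since by assumption $\curv(k) > e(A)/2 - 1$, this forces $\curv(M) < \curv(k)$ while $\projdim_A M = \infty$. Theorem \ref{first}(1) now applies to $M$ and gives $\curv(k) \leq e(A)/2 - 1$, contradicting the hypothesis. (Alternatively, under these assumptions Corollary \ref{cor-first} applied to $M$ already forces $\curv(M) = \curv(k) > e(A)/2 - 1$, in direct conflict with the displayed bound.)

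For (b) the argument has exactly the same shape, only with Theorem \ref{second}(2)(b) in place of (1)(b). Assume $\projdim_A M = \infty$ and $\injdim_A N = \infty$. Then
$$\max \{\curv(M),\ \injcurv(N)\} \leq \frac{e(A)}{2} - 1 < \curv(k),$$
so in particular $\projdim_A M = \infty$ and $\curv(M) < \curv(k)$. Theorem \ref{first}(1) again forces $\curv(k) \leq e(A)/2 - 1$, a contradiction. Note that here I do not need an injective-curvature analogue of Corollary \ref{cor-first}; the bound on $\curv(M)$ alone is enough to invoke Theorem \ref{first}.

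I do not anticipate a substantive obstacle: all the analytic and homological content is packed into Theorems \ref{first} and \ref{second}, and the corollary is merely the observation that the two bounds involved are incompatible once $\curv(k)$ is assumed strictly larger than $e(A)/2 - 1$. The only thing to be careful about is to ensure that the hypotheses of Theorem \ref{first} (in particular the existence of $\lim_n \sqrt[n]{\beta_n(k)}$) are in force, which is guaranteed by the phrase ``hypotheses as in \ref{first}'' in the statement of the corollary.
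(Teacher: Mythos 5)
Your proof is correct and uses essentially the same ingredients as the paper's: assume both resolutions are infinite, invoke Theorem \ref{second}(1)(b) (resp. (2)(b)) to bound $\curv(M)$ by $e(A)/2 - 1$, and then contradict this with Theorem \ref{first}(1) applied to $M$. The parenthetical ``alternative'' route you mention via Corollary \ref{cor-first} is in fact verbatim the paper's argument, so the two are interchangeable presentations of one proof.
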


\begin{example}
Let $(A,\m)$ be a Gorenstein local ring with $e(A) = \mu(\m) - \dim A + 2$. If $h = \mu(\m) - \dim A \geq 3$ then it is not difficult to prove that $\curv(k) > e(A)/2 - 1$, see \cite[Theorem 2]{S}. It follows that $A$ is the so called AB-ring, i.e., $\Ext^n_A(M, N) = 0$ for $n \gg 0$ if and only if $\Ext^n_A(N, M) = 0$ for $n \gg 0$. This fact was first observed in \cite{HJ}
\end{example}
\s We note that to compute curvature of a module $M$ we need to have knowledge of the asymptotic behaviour of betti numbers of a module.
So the following result is also un-expected.
\begin{theorem}\label{third}(with hypotheses as in \ref{setup}).
 Let $i_0 \geq \dim A - \depth M $.
If $\projdim M = \infty$ and
$$ \frac{\beta_{i_0+1}(M)}{\beta_{i_0}(M)}  > \frac{e(A)}{1 + \curv(k)}  - 1,$$
then $\curv(M) \geq \liminf_n \sqrt[n]{\beta_n(k)}.$
\end{theorem}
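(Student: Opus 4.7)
The plan is to first reduce to the Artinian case. I would replace $M$ by $N := \Omega_A^{i_0} M$; iterated use of the depth lemma with $i_0 \geq \dim A - \depth M$ forces $\depth N = \dim A$, so $N$ is maximal Cohen--Macaulay. The Betti numbers just shift, $\beta_j(N) = \beta_{j + i_0}(M)$, so $\curv N = \curv M$ and the hypothesis becomes $\beta_1(N)/\beta_0(N) > e(A)/(1 + \curv k) - 1$. After the standard faithfully flat extension $A \to A[t]_{\m A[t]}$, which preserves multiplicity, Betti numbers and curvatures, I may assume the residue field is infinite and then choose a minimal reduction $\underline{x} = x_1, \ldots, x_d$ of $\m$ which is also $N$-regular (prime avoidance, since $N$ is MCM). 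Putting $B := A/(\underline{x})$ and $\bar N := N/(\underline{x})N$, the ring $B$ is Artinian with $\ell(B) = e(A)$; since $\underline{x}$ is $N$-regular one has $\beta_j^B(\bar N) = \beta_j^A(N)$; and by the change-of-rings spectral sequence $\curv^B k = \curv^A k$ and $\liminf_n \sqrt[n]{\beta_n^B(k)} = \liminf_n \sqrt[n]{\beta_n^A(k)}$. So it suffices to prove the statement inside the Artinian ring $B$.

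Write $e := \ell(B)$, $c := \curv k$, $b_n := \beta_n^B(\bar N)$ and $u_n := \ell_B(\Omega_B^n \bar N)$. The short exact sequence $0 \to \Omega^{n+1} \bar N \to B^{b_n} \to \Omega^n \bar N \to 0$ immediately yields the basic length recursion $u_{n+1} + u_n = e\, b_n$ for $n \geq 0$. A short asymptotic calculation shows that if $b_{n+1}/b_n \to \gamma$ then necessarily $u_n/b_n \to e/(1+\gamma)$, so that $e/(1+c) - 1$ is the ``equilibrium'' Betti ratio attached to curvature $c$. Rewritten as $(b_0 + b_1)(1+c) > e\, b_0$, the ratio hypothesis asserts precisely that $\bar N$, already at level zero, sits strictly above this equilibrium value.

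To conclude, I would propagate this one-step super-equilibrium condition into persistent growth of $b_n$. The plan is to combine the short exact sequences $0 \to \n \bar N \to \bar N \to k^{b_0} \to 0$ and $0 \to \Omega \bar N \to \n^{b_0} \to \n \bar N \to 0$ over $B$, where $\n$ denotes the maximal ideal of $B$. Their long Tor sequences, combined with the identity $P^B_\n(t) = (P^B_k(t) - 1)/t$ coming from $0 \to \n \to B \to k \to 0$, yield coefficient-wise Poincar\'e-series inequalities among $P_{\bar N}$, $P_{\n \bar N}$ and $P_k$; iterating these together with the length recursion should force, for infinitely many $n$, a lower bound of the form $b_n \geq C \cdot \beta_n^B(k)/p(n)$ with $C > 0$ and $p$ polynomial. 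Taking $\limsup$ of $n$-th roots then gives $\curv \bar N \geq \liminf_n \sqrt[n]{\beta_n^B(k)}$, which is exactly the desired bound on $\curv M$. The main expected obstacle lies precisely in this propagation step: the equilibrium heuristic is only asymptotic and does not on its own preclude oscillation of $u_n/b_n$; moreover, the fact that the conclusion is phrased with $\liminf$ rather than $\curv k$ suggests that the comparison with $\beta_n^B(k)$ is only available along a cofinal subsequence, and turning the Poincar\'e-series inequalities into such a cofinal comparison will be the main technical work.
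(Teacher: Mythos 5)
Your reduction to the Artinian case is essentially the paper's, and the ``equilibrium'' interpretation of the ratio $e(A)/(1 + \curv k) - 1$ is a nice heuristic. However the part of the argument you flag yourself as ``the main technical work'' is where the proof actually lives, and the Poincar\'e-series plan you sketch is unlikely to deliver: there is no mechanism in those long exact Tor sequences that forces the one-step inequality $(b_0 + b_1)(1+c) > e\,b_0$ to persist at higher $n$, nor one that produces a cofinal comparison $b_n \gtrsim \beta_n^B(k)$. The recursion $u_{n+1} + u_n = e\,b_n$ alone admits solutions with $b_{n+1}/b_n$ dropping back below the equilibrium value immediately after $n = 0$, so the ``propagation'' step is a genuine gap, not just a technical hurdle.

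The paper instead argues by contraposition, and the key device is tensoring with syzygies of $k$ rather than comparing Poincar\'e series. Assume $\curv(M) < \liminf_n \sqrt[n]{\beta_n(k)}$. By the elementary fact \ref{lim} this gives $\beta_n(M)/\beta_n(k) \to 0$. Now set $X_i = \Omega^i(k)$ and tensor $0 \to \Omega^1(M) \to A^{\beta_0(M)} \to M \to 0$ with $X_i$; the resulting four-term exact sequence has kernel $\Tor_1(M, X_i) \cong k^{\beta_{i+1}(M)}$, and the bound $\ell(U \otimes V) \geq \mu(U)\mu(V)$ from \ref{est} gives
$\beta_{i+1}(M) + \beta_0(M)\,\ell(X_i) \geq \beta_i(k)\bigl(\beta_0(M) + \beta_1(M)\bigr)$.
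Writing the same inequality with $X_{i+1}$, adding, using $\ell(X_i) + \ell(X_{i+1}) = e(A)\,\beta_i(k)$ (here $\dim A = 0$), dividing by $\beta_0(M)\beta_i(k)$, and taking $\limsup_i$ (the error term vanishes because $\beta_n(M)/\beta_n(k) \to 0$, and $\limsup_i \beta_{i+1}(k)/\beta_i(k) \geq \curv k$ by \ref{rudin}) yields
$e(A) \geq \bigl(1 + \beta_1(M)/\beta_0(M)\bigr)\bigl(1 + \curv(k)\bigr)$,
i.e.\ $\beta_1(M)/\beta_0(M) \leq e(A)/(1 + \curv k) - 1$, contradicting the hypothesis. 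This is the step your outline is missing: the contrapositive reformulation turns the ratio hypothesis into the \emph{negation} of a length inequality that follows from $\beta_n(M)/\beta_n(k) \to 0$, and the $\Omega^i(k)$-tensoring converts that asymptotic vanishing into the needed bound on the first Betti ratio. Without either the contrapositive framing or a substitute for the tensoring estimate, the proof does not close.
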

\begin{remark}
We note that $\liminf_n \sqrt[n]{\beta_n(k)} > 1$, see \cite[8.2.1]{A}.
\end{remark}

We now describe in brief the contents of this paper. In section two we discuss some preliminaries. In section three we prove Theorem \ref{first} and Corollary \ref{cor-first}. In section four we prove Theorem \ref{second}. Finally in section five we prove Theorem \ref{third}.

\begin{remark}
  The main contribution of this paper is to guess the results. The proofs are not particularly hard.
\end{remark}
\section{Preliminaries}
In this section we discuss a few preliminary results that we need.

\s \label{base} For most of our results we need that the residue field of $A$ is infinite. For one particular application we need to assume that $A$ is complete. Both these base changes are of  the type $(A, \m) \rt (B, \n)$ flat with $\m B = \n$. If the residue field of $A$ is finite then we take $B = A[X]_{\m A[X]}$. We note that the residue field of $B$ is $k(X)$ which is infinite.
If $A$ is not complete then set $B$ to be the completion of $A$.

Let $(A, \m) \rt (B, \n)$ be flat with $\m B = \n$. Set $l = B/\n$. If $M$ is an $A$-module then set $M^\prime = M \otimes_A B$. We note that $k^\prime = l$. We have the following properties which are well-knowm:
\begin{enumerate}
  \item $\dim M = \dim M^\prime$ and $\depth_A M = \depth_B M^\prime$. In particular $A$ is \CM \ if and only if $B$ is \CM.
  \item $\beta_n^A(M) = \beta_n^B(M^\prime)$ for all $n \geq 0$.
  \item $\Tor^A_n(M, N)\otimes_A B = \Tor^B_n(M^\prime, N^\prime)$ for all $n \geq 0$. As $B$ is faithfully flat over $A$ we obtain $\Tor^A_n(M, N) = 0$ if and only if
  $\Tor^B_n(M^\prime, N^\prime) = 0$.
  \item $\Ext_A^n(M, N)\otimes_A B = \Ext_B^n(M^\prime, N^\prime)$ for all $n \geq 0$. As $B$ is faithfully flat over $A$ we obtain $\Ext_A^n(M, N) = 0$ if and only if
  $\Ext_B^n(M^\prime, N^\prime) = 0$.
  \item $\curv_A(M) = \curv_B(M^\prime)$ and $\injcurv_A (M) = \injcurv_B (M^\prime)$.
  \item $e(A) = e(B)$.
\end{enumerate}

\s\label{sup} The reason why we need the \CM \ assumption is that we can reduce to dimension zero easily. To enable this effectively we need the notion of superficial elements.
An element $x \in \m$ is said to be $A$-superficial if there exists non-negative integers $c, n_0$ such that $(\m^{n+1} \colon x)\cap \m^c = \m^n$ for all $n \geq n_0$. It is well-known that superficial elements exist when the residue field of $A$ is infinite. If $\depth A > 0$ and $x$ is $A$-superficial then $x$ is $A$-regular and $(\m^{n+1}\colon x) = \m^n$ for all $n \gg 0$.
Furthermore $e(A) = e(A/(x))$, cf. \cite[Corollary 10]{P-th}.  We note that if $\dim A > 0$ and $x$ is $A$-superficial then $x \in \m \setminus \m^2$.

We need the following result.
\begin{lemma}
\label{mod-x} Let $(A,\m)$ be a Noetherian local ring and let $x \in \m \setminus \m^2$ be $A$-regular. Set $k = A/\m$ and $B = A/(x)$ Then
\begin{enumerate}[\rm (1)]
  \item $\limsup_n\sqrt[n]{\beta_n^A(k)} = \limsup_n\sqrt[n]{\beta_n^B(k)}$.
  \item $\liminf_n\sqrt[n]{\beta_n^A(k)} = \liminf_n\sqrt[n]{\beta_n^B(k)}$.
  \item If $\lim_{n \rt \infty}\sqrt[n]{\beta_n^A(k)}$ exists then so does $\lim_{n \rt \infty}\sqrt[n]{\beta_n^B(k)}$ and both are equal.
\end{enumerate}
\end{lemma}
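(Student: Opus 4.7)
My plan is to reduce everything to the Poincar\'e-series identity
\[
P_k^A(t) \;=\; (1+t)\, P_k^B(t),
\]
equivalently $\beta_n^A(k) = \beta_n^B(k) + \beta_{n-1}^B(k)$ for all $n \geq 0$ (with $\beta_{-1}^B(k) := 0$). This identity is classical under the given hypotheses: starting from a minimal $B$-free resolution $F_\bullet$ of $k$, lift it to $A$-free modules $\tilde F_n = A^{b_n}$ with lifted differentials $\tilde d_n$ having entries in $\m$; then $\tilde d^2 = xs$, with entries of $s$ necessarily in $\m$ (since $\tilde d^2 \in \m^2$ and $x \notin \m^2$). The total complex $R_n = \tilde F_n \oplus \tilde F_{n-1}$ with block differential built from $\tilde d$, $\pm x\,\mathrm{id}$, and $s$ is then a minimal $A$-free resolution of $k$, acyclic because $x$ is $A$-regular; this is a special case of the change-of-rings material in \cite[Chapter~3]{A}.

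Granted the identity, part (1) is routine: $\beta_n^B(k) \leq \beta_n^A(k)$ gives one direction; for the reverse, if $L = \limsup_n \sqrt[n]{\beta_n^B(k)}$ and $\varepsilon > 0$, then eventually $\beta_n^B(k) \leq (L+\varepsilon)^n$, so $\beta_n^A(k) \leq 2(L+\varepsilon)^n$ and $\sqrt[n]{\beta_n^A(k)} \leq 2^{1/n}(L+\varepsilon) \to L + \varepsilon$.

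For part (2), the inequality $\beta_n^B(k) \leq \beta_n^A(k)$ again gives one direction. For the reverse I would combine the identity with the elementary bound $\beta_{n+1}^B(k) \leq \mu(\n)\,\beta_n^B(k)$ (a standard consequence of minimality of the $B$-resolution of $k$). If $L = \liminf_n \sqrt[n]{\beta_n^B(k)}$ and $\varepsilon > 0$, then for infinitely many $n$ we have $\beta_n^B(k) \leq (L+\varepsilon)^n$, and for each such $n$
\[
\beta_{n+1}^A(k) \;\leq\; \bigl(\mu(\n)+1\bigr)\,\beta_n^B(k) \;\leq\; \bigl(\mu(\n)+1\bigr)(L+\varepsilon)^n,
\]
so $\sqrt[n+1]{\beta_{n+1}^A(k)} \leq (\mu(\n)+1)^{1/(n+1)}(L+\varepsilon)^{n/(n+1)} \to L + \varepsilon$ as $n \to \infty$. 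This forces $\liminf_m \sqrt[m]{\beta_m^A(k)} \leq L + \varepsilon$, and letting $\varepsilon \to 0$ yields the claim. Part (3) is then immediate from (1) and (2).

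The only substantive ingredient is the Poincar\'e-series identity; the main technical obstacle in a fully self-contained proof is verifying minimality and $D^2 = 0$ for the Koszul-lifted double complex, which I would prefer to import from \cite{A}. The subtle point in the numerical work is (2), where one must exploit the upper bound on $\beta_{n+1}^B(k)$ in order to control $\beta_{n+1}^A(k)$ in terms of $\beta_n^B(k)$ alone.
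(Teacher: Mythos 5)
Your overall strategy is the same as the paper's: start from the change-of-rings identity $\beta_n^A(k)=\beta_n^B(k)+\beta_{n-1}^B(k)$ (the paper cites \cite[3.3.5]{A} for it rather than re-deriving it), get $\beta_n^B(k)\le\beta_n^A(k)$ for one inequality in each of (1) and (2), and then establish the reverse inequalities by bounding $\beta_n^A$ in terms of the $B$-Betti numbers. Your treatment of (1) is fine, and (3) indeed follows formally.

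The problem is in (2). You invoke ``the elementary bound $\beta_{n+1}^B(k)\le\mu(\n)\,\beta_n^B(k)$ (a standard consequence of minimality).'' This is not a consequence of minimality, and it is false in general. Minimality gives $\Omega^{n+1}_B(k)\subseteq\n F_n$ with $F_n\cong B^{\beta_n^B(k)}$, but a submodule of $\n F_n$ can require more generators than $\mu(\n F_n)=\mu(\n)\beta_n^B(k)$, so no such bound follows. Concretely, for $B=k[x,y]/(x,y)^3$ one has $\mu(\n)=2$, $B$ is Golod with $P^B_k(t)=(1+t)^2/(1-4t^2-3t^3)$, and the ratios $\beta_{n+1}^B(k)/\beta_n^B(k)$ tend to a limit strictly greater than $2$; indeed $\curv_B(k)>2=\mu(\n)$, so $\beta_{n+1}^B(k)>\mu(\n)\beta_n^B(k)$ for all large $n$. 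Thus the key inequality you need at the infinitely many ``good'' indices $n$ actually fails at all large indices, and the estimate $\beta_{n+1}^A(k)\le(\mu(\n)+1)\beta_n^B(k)$ does not hold.

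What the paper does instead, and what you should do, is use Gulliksen's monotonicity theorem \cite[Corollary 3]{G}: $\beta_{n-1}^B(k)\le\beta_n^B(k)$ for all $n\ge1$. Combined with the change-of-rings identity this gives $\beta_n^A(k)\le 2\beta_n^B(k)$ for all $n$, and then the reverse $\liminf$ inequality (and the reverse $\limsup$ inequality, if you prefer a uniform argument for both) follows immediately since $\sqrt[n]{2}\to1$. Your argument for (1) happens to avoid this because a $\limsup$ gives control of $\beta_{n-1}^B$ at \emph{all} large $n$, so $\beta_{n-1}^B(k)\le(L+\varepsilon)^{n-1}\le(L+\varepsilon)^n$ is available directly; a $\liminf$ gives control only along a subsequence, which is exactly why you needed the (false) ratio bound and why Gulliksen's result is the right tool here.
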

\begin{proof}
(1) and (2): We have by \cite[3.3.5]{A},  $\beta_{n}^B(k) + \beta_{n-1}^B(k) = \beta_n^A(k)$ for all $n \geq 1$.
In particular  $\beta_{n}^B(k) \leq \beta_n^A(k)$. So we have

 $\limsup_n\sqrt[n]{\beta_n^A(k)} \geq  \limsup_n\sqrt[n]{\beta_n^B(k)}$ and
  $\liminf_n\sqrt[n]{\beta_n^A(k)} \geq \liminf_n\sqrt[n]{\beta_n^B(k)}$.

  By \cite[Corollary 3]{G}, we have $\beta_{n}^B(k) \geq \beta_{n-1}^B(k) $ for all $n \geq 1$. So
  $\beta_n^A(k) \leq 2 \beta_{n}^B(k)$. As $\lim_n \sqrt[n]{2} = 1$ it follows that

$\limsup_n\sqrt[n]{\beta_n^A(k)} \leq  \limsup_n\sqrt[n]{\beta_n^B(k)}$ and
  $\liminf_n\sqrt[n]{\beta_n^A(k)} \leq \liminf_n\sqrt[n]{\beta_n^B(k)}$.

  The result follows.

  (3) This follows from (1) and (2).
\end{proof}

\s \label{lim} We will need the following easily proven fact. Let $\{ a_n \}_{n \geq 0}$ and $\{ b_n \}_{n \geq 0}$ be sequences. Assume $a_n > 0$ and $b_n > 0$ for all $n$.
If $\limsup_n\sqrt[n]{a_n} < \liminf_n\sqrt[n]{b_n}$ then $\lim_{n \rt \infty} a_n/b_n = 0$.

\s \label{rudin} We will also need the following, see \cite[3.37]{RW}. Let $\{ c_n \}_{n \geq 0}$ be a sequence of positive numbers. Then
\[
\liminf_n \frac{c_{n+1}}{c_n} \leq \liminf_n \sqrt[n]{c_n}  \quad \text{and} \quad  \limsup_n \sqrt[n]{c_n} \leq \limsup_n \frac{c_{n+1}}{c_n}.
\]

\s By $\Omega^i(M)$ we denote the $i^{th}$-syzygy of $M$. Also $\mu(M)$ denotes the number of minimal generators of $M$.

\s \label{est} Let $U, V$ be finite length modules. Set $r(V) = \ell(\Hom_A(k, N))$. Then
\begin{enumerate}
  \item $\ell(U\otimes V) \geq \mu(U)\mu(V)$.
  \item $\ell(\Hom_A(U, V)) \supseteq \mu(U)r(V)$.
\end{enumerate}
 To see (1) we note that we have a surjective map $U\otimes V \rt U\otimes V \otimes k$. The result follows.

 (2)  We have a surjection $U \rt U/\m U$. This creates an inclusion \\ $\Hom_A(U/\m U, V) \rt \Hom_A(U, V)$. The result follows.

\section{proof of Theorem \ref{first} and Corollary \ref{cor-first}}
We first give:
\begin{proof}[Proof of Theorem \ref{first}]
We first note that we may assume that the residue field of $A$ is infinite, see \ref{base}. After taking appropriate syzygy we may also assume that $M$ is maximal \CM. We reduce to the case when $\dim A = 0$ as follows. If $\dim A > 0$ then as the residue field of $A$ is infinite there exists an $A$-superficial element $x$. As $A$ is \CM \ we have $\depth A = \dim A > 0$. So $x$ is $A$-regular.
As $M$ is maximal \CM \ it is also $M$-regular. Set $B = A/(x)$.  We note that if $\mathbf{F}  $ is a minimal free $A$-resolution of $M$ then $\mathbf{F}\otimes_A B  $ is a minimal free $B$-resolution of $M/xM$.  So $\curv_A(M) = \curv_B(M/xM)$. We note that $x \in \m \setminus \m^2$. So by \ref{mod-x} we have $\lim_{n \rt \infty}sqrt[n]{\beta_n^B(k)}$  exists and is equal to  $\lim_{n \rt \infty}\sqrt[n]{\beta_n^A(k)}$. Thus it suffices to prove the result for $B$. Iterating we may reduce to the case when $\dim A = 0$.

Set $\curv(k) = \alpha$ and let $\curv(M) = \theta $. We have $\theta < \alpha$ and $\lim \sqrt[n]{\beta_n^A(k)} $ exists (and so is equal to $\alpha$). So by \ref{lim} we have
$\lim \beta_n(M)/\beta_n(k) = 0$. Set $X_i = \Omega^i(k)$.

We have an exact sequence $0 \rt \Omega^1(M) \rt A^{\beta_0(M)} \rt M \rt 0$.  Tensoring this exact sequence with $X_i$ we obtain
\[
0 \rt k^{\beta_{i+1}(M)} \rt \Omega^1(M)\otimes X_i  \rt X_i^{\beta_0(M)} \rt M \otimes X_i \rt 0.
\]
So we have by \ref{est}
\[
\beta_{i+}(M) + \beta_0(M)\ell(X_i)  \geq \beta_i(k) (\beta_0(M) + \beta_1(M)).
\]
So
\[
\ell(X_i) \geq \frac{1}{\beta_0(M)}\left( - \beta_{i+1}(M)  + \beta_i(k) (\beta_0(M) + \beta_1(M))\right).
\]
Similarly we obtain
\[
\ell(X_{i+1}) \geq \frac{1}{\beta_0(M)}\left( - \beta_{i+2}(M)  + \beta_{i+1}(k) (\beta_0(M) + \beta_1(M))\right).
\]
We note that $\ell(X_i) + \ell(X_{i+1})  = e(A)\beta_i(k)$. Adding the above two inequalities and dividing by $\beta_i = \beta_i(k)$ we obtain
\[
e(A) \geq \frac{1}{\beta_0(M)}\left( - (\beta_{i+1}(M) + \beta_{i+2}(M))/\beta_i + (\beta_0(M) +\beta_1(M)) (1 + \beta_{i+1}/\beta_i) \right).
\]
We take limsup on both sides. By our hypotheses and \ref{lim} we have
$$ \limsup_i - (\beta_{i+1}(M) + \beta_{i+2}(M))/\beta_i = 0.$$
By \ref{rudin} we have $\limsup_i \beta_{i+1}/\beta_i \geq \alpha$.
So we obtain
$$ e(A) \geq (1 + \beta_1(M)/\beta_0(M))(1+ \alpha).$$
Replacing $M$ by $\Omega^i(M)$ we obtain
$$ e(A) \geq (1 + \beta_{i+1}(M)/\beta_i(M))(1+ \alpha).$$
Taking limsup and by \ref{rudin} we obtain
\begin{equation*}
e(A) \geq (1 + \theta)(1 + \alpha). \tag{*}
\end{equation*}
(1) We note that $\theta \geq 1$. So we have by (*)
\[
(1 + \alpha) \leq e_0(A)/2.
\]
The result follows.

(2) As $\alpha > \theta$, by (*) we get $e(A) > (1+\theta)^2$. The result follows.
\end{proof}

We now give
\begin{proof}[Proof of Corollary \ref{cor-first}]
This follows  from part (1) of Theorem \ref{first}.
\end{proof}

\begin{example}
 \label{ex-1} We give an example  which shows that the assertion of Corollary \ref{cor-first} is sharp. Let $(B, \n)$ be a one dimensional \CM \ local ring of minimal multiplicity with $e(B) = h + 1$ with $h \geq 2$. Also assume that the residue field of $B$ is infinite. Note $B$ is not Gorenstein. Also, $G(B) = \bigoplus_{n \geq 0}\n^n/\n^{n+1}$, the associated graded ring of $B$ is \CM, see \cite{S-min}.  We also note that $\curv_B(k) = h$. Let $x \in \n$ be $B$-superficial. Then the initial form $x^*$ of $x$ is $G(B)$-regular, cf. \cite[Theorem 8]{P-th}. We let $(A, \m)  = (B/(x^2), \n/(x^2))$. Then $e(A) = 2e(B) = 2h + 2$.
It is not difficult to prove that $\lim_{\n \rt \infty} \beta_{n+1}^A(k)/\beta_n^A(k) = h$. So by \ref{rudin} we obtain that $\lim_{\n \rt \infty} \sqrt[n]{\beta_n^A(k)}$ exists and is equal to $h$.
We note that $h = e(A)/2 - 1$.  Note $M = A/(x)$ is a $1$-periodic $A$-module. So $\curv_A(M) = 1$.
\end{example}
\section{Proof of Theorem \ref{second}}
We first give
\begin{proof}[Proof of Theorem \ref{second}]
(1) We assume $\projdim M = \projdim N = \infty$. We first reduce to the case when $\dim A = 0$. Suppose $\dim A > 0$.
By Lemma \ref{base} we can assume that the residue field of $A$ is infinite. After taking sufficient high syzygies we can assume that $M$ and $N$ are both maximal \CM \ $A$-modules. Let $x \in \m$ be $A$-superficial.  Then $x$ is also $A$-regular. Set $B = A/(x)$. We note that $e(B) = e(A)$. As $M$ and $N$ are MCM we get that $x$ is both $M$ and $N$-regular.
The exact sequence $0 \rt N \rt N \rt N/xN \rt 0$ yields $\Tor^A_i(M, N/xN) = 0$ for $i \gg 0$. We have $\Tor^B_i(M/xM, N/xN) = \Tor^A_i(M, N/xN)$ for all $i \geq 0$,
 see \cite[Lemma 2, section 18]{M}. So $\Tor^B_i(M/xM, N/xN) =  0$ for $i \gg 0$. We also note that if $\mathbf{F}$ (respectively $\mathbf{G}$) is a minimal free $A$ resolution of $M$ (respectively $N$) then
$\mathbf{F} \otimes_A B$ (respectively $\mathbf{G}\otimes_A B$) is a minimal free $B$ resolution of $M/xM$ (respectively $N/xN$). So $\curv_B(M/xM) = \curv_A(M)$ and $\curv_B(N/xN) = \curv_A(N)$.
Thus it suffices to prove the result for $B$. Iterating we can reduce to the case when $\dim A = 0$. After taking a high syzygy of $M$ we may assume that $\Tor^A_i(M, N) = 0$ for $i > 0$.

We have an exact sequence $0 \rt \Omega^1(N) \rt A^{\beta_0(N)} \rt N \rt 0$. Set $M_j = \Omega^j(M)$. As $\Tor^A_i(M, N) = 0$ for all $i > 0$ we obtain after tensoring with $M_n$ an exact sequence $0 \rt M_n \otimes\Omega^1(N) \rt M_n^{\beta_0(N)} \rt M_n \otimes N \rt 0$. So we have by \ref{est}
\[
\beta_0(N) \ell(M_n) \geq (\beta_0(N) + \beta_1(N)) \beta_n(M).
\]
So we have
\[
\ell(M_n) \geq \left(1+ \frac{\beta_1(N)}{\beta_0(N)}\right) \beta_n(M).
\]
Similarly we obtain
\[
\ell(M_{n+1}) \geq \left(1+ \frac{\beta_1(N)}{\beta_0(N)}\right) \beta_{n + 1}(M).
\]
We note that $\ell(M_n) + \ell(M_{n+1}) = e(A) \beta_n(M)$. Adding the two equations and dividing by $\beta_n(M)$ we obtain
\[
e(A) \geq \left(1+ \frac{\beta_1(N)}{\beta_0(N)}\right) \left( 1 + \frac{\beta_{n+1}(M)}{\beta_{n}(M)}\right).
\]
Taking limsup and by \ref{rudin} we obtain
\[
e(A) \geq \left(1+ \frac{\beta_1(N)}{\beta_0(N)}\right) \left( 1 + \curv(M)\right).
\]
Replacing $N$ with $\Omega^n(N)$ we obtain
\[
e(A) \geq \left(1+ \frac{\beta_{n+1}(N)}{\beta_n(N)}\right) \left( 1 + \curv(M)\right).
\]
Taking limsup and by \ref{rudin} we obtain
\begin{equation*}
  e(A) \geq \left(1+ \curv(N)\right) \left( 1 + \curv(M)\right). \tag{*}
\end{equation*}

(a) If $\projdim M $ or $\projdim N$ is finite then we have nothing to prove. So assume $\projdim M = \projdim N = \infty$.
If $\min\{ \curv{M}, \curv(N) \} > \sqrt{e(A)} -1$ then by (*) we obtain $e(A) > e(A)$. This is a contradiction. The result follows.

(b) As $\projdim M = \projdim N = \infty$ we have $\curv(M) \geq 1$ and $\curv(N) \geq 1$. The result follows easily from (*).

(2)  We assume $\projdim M = \injdim N = \infty$.  We first reduce to the case when $\dim A = 0$. Suppose $\dim A > 0$.
By Lemma \ref{base} we can assume that the residue field of $A$ is infinite and that $A$ is complete. So $A$ has a canonical module $\omega$. After taking sufficient high syzygies we can assume that $M$ is a maximal \CM \ $A$-module. Let $ 0 \rt Y \rt X \rt N \rt 0$ be a MCM approximation of $N$, i.e., $X$ is maximal \CM \ and
$\injdim N  < \infty$. So we obtain $\Ext^i_A(M, X) \cong \Ext^i_A(M, N) = 0$ for all $i \gg 0$. We also note that $\injcurv X = \injcurv N$. Thus replacing $N$ by $X$ we can also assume that $N$ is a maximal  \CM \ $A$-module. Let $x \in \m$ be $A$-superficial.  Then $x$ is also $A$-regular. Set $B = A/(x)$. We note that $e(B) = e(A)$. As $M$ and $N$ are MCM we get that $x$ is both $M$ and $N$-regular.
The exact sequence $0 \rt N \rt N \rt N/xN \rt 0$ yields $\Ext_A^i(M, N/xN) = 0$ for $i \gg 0$. We have $\Ext_B^i(M/xM, N/xN) = \Ext_A^i(M, N/xN)$ for all $i \geq 0$, see
\cite[Lemma 2, section 18]{M}.
  As argued before we have $\curv_B(M/xM) = \curv_A(M)$. Also as $\Ext^{n+1}_A(k, N) \cong  \Ext^{n}_B(k, N/xN)$ for all $n \geq 0$, see \cite[Lemma 2, section 18]{M}, we obtain $\injcurv_B(N/xN) = \injcurv_A(N)$. Thus it suffices to prove the result for $B$. Iterating we can reduce to the case when $\dim A = 0$. After taking a high syzygy of $M$ we may assume that $\Ext_A^i(M, N) = 0$ for $i > 0$.

Let $$\mathbb{I}\colon \quad 0 \rt \omega^{r_0} \rt \omega^{r_1} \cdots \rt \omega^{r_n} \xrightarrow{\partial_n} \omega^{r_{n+1}}\rt \cdots$$ be a minimal injective resolution of $N$. Then note that $\Ext^n_A(k, N) = k^{r_n}$ for all $n \geq 0$. Set $N_{n+1} = \image \partial_n$. Set $M_n = \Omega^n(M)$.
Let $0 \rt N \rt \omega^{r_0} \rt N_1 \rt 0$.  So for all $n \geq 0$ we have an exact sequence
\[
0 \rt \Hom_A(M_n, N) \rt \Hom(M_n, \omega)^{r_0} \rt \Hom_A(M_n, N_1) \rt 0.
\]
So we have (see \ref{est})
\[
r_0 \ell(M_n^\dagger)  \geq \beta_n(M)(r_0(N) + r_0(N_1)).
\]
We have $\ell(M_n^\dagger) = \ell(M_n)$ and $r_0(N_1) = r_1(N) = r_1$.
So we have
$$ \ell(M_n) \geq \left(1 + r_1/r_0)\right)\beta_n(M). $$
 Similarly we obtain
$$ \ell(M_{n+1}) \geq \left(1 + \frac{r_1}{r_0}\right)\beta_{n + 1}(M). $$
 We have $\ell(M_n) + \ell(M_{n+1}) = e(A)\beta_n(M)$. Adding the two equations and dividing by $\beta_n(M)$ we obtain
 \[
 e(A) \geq \left(1 + \frac{r_1}{r_0}\right) \left( 1 + \frac{\beta_{n+1}(M)}{\beta_n(M)} \right).
 \]
 Taking lim sup and by \ref{rudin} we obtain
 \[
 e(A) \geq \left(1 + \frac{r_1}{r_0}\right) \left( 1 + \curv(M)\right).
 \]
 Replacing $N$ by $N_j$ we obtain
 \[
 e(A) \geq \left(1 + \frac{r_{j+1}}{r_j}\right) \left( 1 + \curv(M)\right).
 \]
 Taking lim sup and by \ref{rudin} we obtain
 \begin{equation*}
  e(A) \geq \left(1 + \injcurv(N)\right) \left( 1 + \curv(M)\right). \tag{**}
 \end{equation*}
 \[
 \]

 (a)  If $\projdim M $ or $\injdim N$ is finite then we have nothing to prove. So assume $\projdim M = \injdim N = \infty$.
 If $\min\{ \curv{M}, \injcurv(N) \} > \sqrt{e(A)} -1$ then by (**) we obtain $e(A) > e(A)$. This is a contradiction. The result follows.

(b) As $\projdim M = \injdim N = \infty$ we have $\curv(M) \geq 1$ and $\injcurv(N) \geq 1$. The result follows easily from (**).
\end{proof}

We now give
\begin{proof}[Proof of Corollary \ref{cor-first-s}]
(a) Suppose if possible $\projdim M = \projdim N = \infty$. Then by \ref{cor-first} we have $\curv(M) = \curv(k) > e(A)/2 -1$. This contradicts  the assertion of Theorem \ref{second}(1)(b). The result follows.

(b)Suppose if possible $\projdim M = \injdim N = \infty$. Then by \ref{cor-first} we have $\curv(M) = \curv(k) > e(A)/2 -1$. This contradicts  the assertion of Theorem \ref{second}(2)(b). The result follows.
\end{proof}
\section{Proof of Theorem \ref{third}}
We give
\begin{proof}[Proof of Theorem \ref{third}]
We note that $\Omega^{i_0}(M)$ is maximal \CM. We replace $M$ by $\Omega^{i_0}(M)$ and assume that $M$ is maximal \CM \ with infinite projective dimension. We first note that we may assume that the residue field of $A$ is infinite, see \ref{base}.  We reduce to the case when $\dim A = 0$ as follows. If $\dim A > 0$ then as the residue field of $A$ is infinite there exists an $A$-superficial element $x$. As $A$ is \CM \ we have $\depth A = \dim A > 0$. So $x$ is $A$-regular.
As $M$ is maximal \CM \ it is also $M$-regular. Set $B = A/(x)$.  We note that if $\mathbf{F}  $ is a minimal free $A$-resolution of $M$ then $\mathbf{F}\otimes_A B  $ is a minimal free $B$-resolution of $M/xM$. So $\curv_A(M) = \curv_B(M/xM)$.  We note that $x \in \m \setminus \m^2$. So by \ref{mod-x} we have $\liminf_n\sqrt[n]{\beta_n^B(k)} = \liminf_n\sqrt[n]{\beta_n^A(k)}$ and $\curv_B(k) = \curv_A(k)$.  Thus it suffices to prove the result for $B$. Iterating we may reduce to the case when $\dim A = 0$.

We have
$$ \frac{\beta_{1}(M)}{\beta_{0}(M)}  > \frac{e(A)}{1 + \curv(k)}  - 1.$$
 We want to prove that $\curv(M) \geq \liminf \sqrt[n]{\beta_n(k)}.$

Suppose if possible $\curv(M) < \liminf \sqrt[n]{\beta_n(k)}.$ Set $\curv(k) = \alpha$ and let \\ $\curv(M) = \theta $. We have $\theta < \liminf \sqrt[n]{\beta_n(k)} $.
  So by \ref{lim} we have
$\lim \beta_n(M)/\beta_n(k) = 0$. Set $X_i = \Omega^i(k)$.
By an argument identical to proof of Theorem \ref{first} we have
$$ e(A) \geq (1 + \beta_1(M)/\beta_0(M))(1+ \alpha).$$
So we get
$$ \frac{\beta_{1}(M)}{\beta_{0}(M)}  \leq \frac{e(A)}{1 + \curv(k)}  - 1.$$
This contradicts our hypotheses. The result follows.
\end{proof}

\begin{example}\label{cyclic}
We continue with our example in \ref{ex-1}. We prove that $I$ is an ideal of $A$ with $\mu(I) \geq 2$ then $\curv(A/I) = \curv(k) = h$.
To see this we note that
$$\mu(I)/1 = \beta_1(A/I)/\beta_0(A/I)  \geq 2.$$
However $$\frac{e(A)}{\curv{k} + 1} - 1 = 2(h+1)/(h+1) - 1 = 1.$$
It follows from Theorem \ref{third} that $\curv(A/I) \geq \liminf_n \sqrt[n]{\beta_n(k)} = h$.
But \\ $\curv(A/I) \leq \curv(k) = h$. The result follows.

Also $A/(x)$ has periodic resolution. Thus the result does not hold if $\mu(I) = 1$.
\end{example}
%\section*{Acknowledgements}

\end{document}